\newtheorem{assumption}[theorem]{Assumption}
\newcommand{\R}{\mathbb{R}}
\newcommand{\EE}{\mathbb{E}}
\newcommand{\cD}{{\mathcal{D}}}
\newcommand{\cL}{{\mathcal{L}}}
\newcommand{\cQ}{{\mathcal{Q}}}
\def\<#1,#2>{\langle #1,#2\rangle}
\newcommand{\algname}[1]{{\sf  #1}\xspace}
\author{\textit{Eduard Gorbunov}\\ Mohamed bin Zayed University of Artificial Intelligence, Abu Dhabi, UAE\\
Moscow Institute of Physics and Technology, Moscow, Russia\thanks{Part of the work was done when the author was at MIPT.}
}
\title{\Large{Unified analysis of SGD-type methods}}
\begin{document}
   
\maketitle

\begin{abstract}
    This note focuses on a simple approach to the unified analysis of \algname{SGD}-type methods from \cite{gorbunov2020unified} for strongly convex smooth optimization problems. The similarities in the analyses of different stochastic first-order methods are discussed along with the existing extensions of the framework. The limitations of the analysis and several alternative approaches are mentioned as well.
\end{abstract}

\noindent\textbf{\textit{MSC Codes}}: 90C15, 90C25, 68W15, 68W20, 65K10\\

\noindent\textbf{\textit{Keywords}}: stochastic optimization, stochastic gradient descent, strongly convex optimization, variance reduction

\section{Introduction}
Consider the unconstrained minimization problem 
\begin{equation}
    \min\limits_{x\in\R^d} f(x), \label{eq:main_problem}
\end{equation}
where the objective $f:\R^d \to \R$ is assumed to be $L$-smooth, i.e., $\forall x,y\in \R^d$
\begin{equation}
    \|\nabla f(x) - \nabla f(y)\| \leq L\|x- y\| \label{eq:Smoothness}
\end{equation}
and $\mu$-strongly convex, i.e., $\forall x,y\in \R^d$
\begin{equation}
    f(y) \geq f(x) + \langle \nabla f(x), y - x \rangle + \frac{\mu}{2}\|y - x\|^2, \label{eq:strong_convexity} 
\end{equation}
where $\langle a, b\rangle$ denotes the standard inner product in $\R^d$ and $\|a\| = \sqrt{\langle a, a \rangle}$ is $\ell_2$-norm. More precisely, the stochastic versions of \eqref{eq:main_problem} are considered, where function $f$ has an expectation form
\begin{equation}
    f(x) = \EE_\xi\left[f_\xi(x)\right] \label{eq:expectation}
\end{equation}
or, in particular, a finite-sum form
\begin{equation}
    f(x) = \frac{1}{n}\sum\limits_{i=1}^n f_i(x). \label{eq:finite_sum}
\end{equation}
Problems of the form \eqref{eq:expectation} and \eqref{eq:finite_sum} arise in various applications, especially in machine learning \cite{shalev2014understanding} and statistics \cite{spokoiny2012parametric}. Such problems are usually solved via stochastic first-order methods, e.g., Stochastic Gradient Descent (\algname{SGD}) \cite{robbins1951stochastic}:
\begin{equation}
    x^{k+1} = x^k - \gamma g^k, \label{eq:prox_SGD}
\end{equation}
where $g^k$ is an (conditionally) unbiased estimator of the full gradient
\begin{equation}
    \EE_k[g^k] = \nabla f(x^k), \label{eq:unbiasedness}
\end{equation}
where $\EE_k[\cdot]$ denotes the expectation w.r.t.\ the randomness coming from $k$-th iteration. 

There exists a large number of different analyses of \algname{SGD} under various assumptions; see the recent work \cite{garrigos2023handbook} summarizing state-of-the-art results on the convergence of \algname{SGD}. Moreover, since the original work introducing \algname{SGD} \cite{robbins1951stochastic}, many \algname{SGD}-like methods have been developed, including variance-reduced variants \cite{gower2020variance}, distributed variants with communication compression \cite{mishchenko2019distributed}, and coordinate-wise randomization \cite{nesterov2012efficiency,richtarik2014iteration}.

Despite the seeming differences between many versions of \algname{SGD}, they can be analyzed using a similar pattern. This observation is made in \cite{gorbunov2020unified}, where the authors proposed a unified framework for the analysis of \algname{SGD}-like methods in different setups. This work discusses this approach in detail and considers several special cases. Several extensions and alternative approaches are also mentioned.

\section{Unified Analysis}

The main building block of the unified analysis from \cite{gorbunov2020unified} is the following parametric assumption on the gradient estimate $g^k$ and the problem itself.

\begin{assumption}[Assumption 4.1 from \cite{gorbunov2020unified}]\label{as:param_assumption}
    Let $\{x^k\}_{k\geq 0}$ be the iterates produced by \algname{SGD} (Algorithm in \eqref{eq:prox_SGD}), where stochastic gradients are unbiased (i.e., satisfy \eqref{eq:unbiasedness}). Assume that there exist non-negative constants $A, B, C, D_1, D_2 \geq 0, \rho \in (0,1]$ and a (possibly) random non-negative sequence $\{\sigma_k^2\}_{k\geq 0}$ such that the following two relations hold
    \begin{eqnarray}
    \EE_k\left[\|g^k\|^2\right] &\leq& 2A\left(f(x^k) - f(x^*)\right) + B\sigma_k^2 + D_1, \label{eq:second_moment_bound}\\
    \EE_k\left[\sigma_{k+1}^2\right] &\leq& (1 - \rho)\sigma_k^2 + 2C\left(f(x^k) - f(x^*)\right) + D_2. \label{eq:sigma_k+1_bound}
    \end{eqnarray}
\end{assumption}

The above assumption is motivated by the analysis of different \algname{SGD}-type methods and can be derived for standard setups. The simplest example is Gradient Descent (\algname{GD}) applied to the minimization problem of $L$-smooth function $f$. Indeed, in this case, $g^k = \nabla f(x^k)$ and a standard property of $L$-smooth functions \cite{nesterov2018lectures} gives
\begin{equation*}
    \EE_k\left[\|g^k\|^2\right] = \|\nabla f(x^k)\|^2 \leq 2L\left(f(x^k) - f(x^*)\right),
\end{equation*}
meaning that Assumption~\ref{as:param_assumption} holds with parameters $A = L$, $B = 0$, $\sigma_k \equiv 0$, $D_1 = 0$, $\rho = 1$, $C = 0$, $D_2 = 0$. 

Another standard example is \algname{SGD} with the noise having bounded variance, i.e., $g^k = \nabla f_{\xi_k}(x^k)$, where $\xi_k$ encodes stochasticity, is sampled from some distribution $\cD$ independently from previous steps, and
\begin{equation*}
    \EE_{\xi \sim \cD}\left[\nabla f_{\xi}(x)\right] = \nabla f(x),\quad \EE_{\xi \sim \cD}\left[\|\nabla f_{\xi}(x) - \nabla f(x)\|^2\right] \leq \sigma^2.
\end{equation*}
Then, using variance decomposition, one can derive
\begin{eqnarray}
    \EE_k\left[\|g^k\|^2\right] &=& \EE_{\xi_k \sim \cD}\left[\|\nabla f_{\xi_k}(x^k)\|^2\right] \notag\\
    &=& \|\nabla f(x^k)\|^2 + \EE_{\xi_k \sim \cD}\left[\|\nabla f_{\xi_k}(x^k) - \nabla f(x^k)\|^2\right] \notag\\
    &=& 2L\left(f(x^k) - f(x^*)\right) + \sigma^2, \label{eq:second_moment_UBV}
\end{eqnarray}
meaning that Assumption~\ref{as:param_assumption} holds with parameters $A = L$, $B = 0$, $\sigma_k \equiv 0$, $D_1 = \sigma^2$, $\rho = 1$, $C = 0$, $D_2 = 0$.

There are much more examples of situations when Assumption~\ref{as:param_assumption} is satisfied; some of them are discussed in detail further. Typically, parameters $A$ and $C$ are related to some smoothness properties of the objective and sampling (in the context of finite-sum problems, one can consider sampling of mini-batches from given distribution, see \cite{gower2019sgd}). Sequence $\{\sigma_k^2\}_{k\geq 0}$ is related to the variance reduction process (in a broad sense), and $\rho$ can be seen as the rate of this process; these aspects will be clarified later. Finally, $D_1$ and $D_2$ usually stand for the noises not handled via variance reduction, and $B$ is some constant.

Under Assumption~\ref{as:param_assumption}, it is possible to derive the following general result on the convergence of \algname{SGD} (Algorithm in \eqref{eq:prox_SGD}).
\begin{theorem}[Theorem~4.1 from \cite{gorbunov2020unified}]\label{thm:main_theorem}
    Let $f$ be $\mu$-strongly convex and Assumption~\ref{as:param_assumption} be satisfied. Assume that
    \begin{equation}
        \gamma \leq \min\left\{\frac{1}{\mu}, \frac{1}{A + CM}\right\} \label{eq:stepsize_condition}
    \end{equation}
    for some constant $M > \nicefrac{B}{\rho}$. Then the iterates of \algname{SGD} (Algorithm in \eqref{eq:prox_SGD}) satisfy
    \begin{eqnarray}
        \EE\left[V_k\right] \leq \left(1 - \min\left\{\gamma\mu, \rho - \frac{B}{M}\right\}\right)^k\EE[V_0] + \frac{(D_1 + MD_2) \gamma^2}{\min\left\{\gamma\mu, \rho - \frac{B}{M}\right\}}, \label{eq:main_result}
    \end{eqnarray}
    where $V_k = \|x^k - x^*\|^2 + M\gamma^2 \sigma_k^2$.
\end{theorem}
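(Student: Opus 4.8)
The plan is to run the standard one-step Lyapunov contraction argument with the Lyapunov function $V_k = \sqn{x^k - x^*} + M\gamma^2\sigma_k^2$ already prescribed in the statement. The key point is that Assumption~\ref{as:param_assumption} and the conditions on $\gamma$ and $M$ have been designed precisely so that $V_k$ decays geometrically up to an additive noise floor. I would first expand $\sqn{x^{k+1}-x^*} = \sqn{x^k - x^* - \gamma g^k}$, apply $\EE_k[\cdot]$, and use unbiasedness \eqref{eq:unbiasedness} to replace $\EE_k[g^k]$ by $\nabla f(x^k)$ in the cross term, obtaining
\[
\EE_k\!\left[\sqn{x^{k+1}-x^*}\right] = \sqn{x^k-x^*} - 2\gamma\angles{\nabla f(x^k),\, x^k - x^*} + \gamma^2\EE_k\!\left[\sqn{g^k}\right].
\]
Then I would bound the inner product from below by $\mu$-strong convexity \eqref{eq:strong_convexity} applied with $y=x^*$, $x=x^k$, namely $\angles{\nabla f(x^k), x^k - x^*} \ge f(x^k) - f(x^*) + \tfrac{\mu}{2}\sqn{x^k-x^*}$, and substitute the second-moment bound \eqref{eq:second_moment_bound} for $\EE_k[\sqn{g^k}]$. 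After collecting terms this yields
\[
\EE_k\!\left[\sqn{x^{k+1}-x^*}\right] \le (1-\gamma\mu)\sqn{x^k-x^*} - 2\gamma(1-\gamma A)\big(f(x^k)-f(x^*)\big) + \gamma^2 B\sigma_k^2 + \gamma^2 D_1.
\]

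Next I would multiply the recursion \eqref{eq:sigma_k+1_bound} by $M\gamma^2$ and add it to the inequality above. Two cancellations then happen, and these are the heart of the proof. The functional-suboptimality contributions combine into $-2\gamma\big(1 - \gamma(A+CM)\big)\big(f(x^k)-f(x^*)\big)$, which is nonpositive by the stepsize condition \eqref{eq:stepsize_condition} since $\gamma(A+CM)\le 1$, so this whole term can be dropped. The $\sigma_k^2$ contributions combine into $\big(B + M(1-\rho)\big)\gamma^2\sigma_k^2 = \big(1 - (\rho - \tfrac{B}{M})\big)\,M\gamma^2\sigma_k^2$, and here the hypothesis $M > B/\rho$ guarantees $\rho - B/M > 0$, i.e. that this coefficient is a genuine contraction factor (and $\rho - B/M \le \rho \le 1$, so it lies in $(0,1)$). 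The outcome is
\[
\EE_k\!\left[V_{k+1}\right] \le (1-\gamma\mu)\sqn{x^k-x^*} + \Big(1 - \big(\rho - \tfrac{B}{M}\big)\Big)M\gamma^2\sigma_k^2 + (D_1 + MD_2)\gamma^2.
\]

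To finish, I set $\beta := \min\{\gamma\mu,\, \rho - B/M\}$, which lies in $(0,1]$ because $\gamma\le 1/\mu$ forces $\gamma\mu \le 1$ and $\rho \le 1$. Since $1-\gamma\mu \le 1-\beta$ and $1-(\rho - B/M) \le 1-\beta$, the two leading terms are both bounded by $(1-\beta)$ times the corresponding piece of $V_k$, giving the one-step estimate $\EE_k[V_{k+1}] \le (1-\beta)V_k + (D_1+MD_2)\gamma^2$. Taking total expectation, unrolling over $k$, and bounding the geometric series $\sum_{j\ge 0}(1-\beta)^j \le 1/\beta$ produces exactly \eqref{eq:main_result}.

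As for difficulty: there is no real analytical obstacle once the Lyapunov function and the parameter conditions are fixed; the argument is essentially careful bookkeeping of two "budgets". The only places needing attention are (i) verifying that the stepsize bound $\gamma \le 1/(A+CM)$ is precisely what absorbs the $+2CM\gamma^2\big(f(x^k)-f(x^*)\big)$ term fed back by the $\sigma$-recursion, and (ii) noting that the weight on $\sigma_k^2$ in $V_k$ must be exactly $M\gamma^2$ — any other $\gamma$-scaling would make the $-2\gamma$ coefficient on $f(x^k)-f(x^*)$ incomparable with the feedback coefficient $2C$, and the cancellation would fail. One should also check the degenerate-looking edge cases ($B=0$, $D_1=D_2=0$, $\rho=1$) are covered: they simply make $\beta = \min\{\gamma\mu,\rho\}$ and the additive term vanish, recovering the linear rate for variance-reduced or exact-gradient regimes.
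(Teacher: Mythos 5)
Your proposal is correct and follows essentially the same argument as the paper: expand $\|x^{k+1}-x^*\|^2$, use unbiasedness and strong convexity, plug in \eqref{eq:second_moment_bound}, add $M\gamma^2$ times \eqref{eq:sigma_k+1_bound}, drop the nonpositive $f(x^k)-f(x^*)$ term via \eqref{eq:stepsize_condition}, contract with $\min\{\gamma\mu,\rho-\nicefrac{B}{M}\}$, and unroll with a geometric series. No gaps.
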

\begin{proof}
    The update rule of \algname{SGD} implies
    \begin{eqnarray*}
        \|x^{k+1} - x^*\|^2 &=&\|x^k - x^*\|^2 - 2\gamma \langle x^k - x^*, g^k \rangle + \gamma^2 \|g^k\|^2.
    \end{eqnarray*}
    Next, one can take the conditional expectation $\EE_k[\cdot]$ from both sides of the above identity and use the unbiasedness of $g^k$:
    \begin{eqnarray*}
        \EE_k\left[\|x^{k+1} - x^*\|^2\right] &\overset{\eqref{eq:unbiasedness}}{=}&  \|x^k - x^*\|^2 - 2\gamma \langle x^k - x^*, \nabla f(x^k) \rangle\\
        &&+ \gamma^2 \EE_k\left[\|g^k\|^2\right].
    \end{eqnarray*}
    Strong convexity of $f$ gives the upper bound for the second term in the right-hand side (RHS), and the last term is upper bounded due to \eqref{eq:second_moment_bound}:
    \begin{eqnarray}
        \EE_k\left[\|x^{k+1} - x^*\|^2\right] &\overset{\eqref{eq:strong_convexity}}{\leq}&  (1 - \gamma \mu)\|x^k - x^*\|^2 \notag\\
        && - 2\gamma \left(f(x^k) - f(x^*)\right) + \gamma^2 \EE_k\left[\|g^k\|^2\right] \notag\\
        &\overset{\eqref{eq:sigma_k+1_bound}}{\leq}& (1 - \gamma \mu)\|x^k - x^*\|^2 + B\gamma^2\sigma_k^2 + D_1\gamma^2 \notag\\
        && - 2\gamma(1 - A\gamma) \left(f(x^k) - f(x^*)\right). \label{eq:technical_1}
    \end{eqnarray}
    Summing up the above inequality with $M\gamma^2$ multiple of \eqref{eq:sigma_k+1_bound}, one can obtain
    \begin{align*}
        \EE_k&\big[V_{k+1}\big] = \EE_k\left[\|x^{k+1} - x^*\|^2 + M\gamma^2 \sigma_{k+1}^2\right]\\
        &\overset{\eqref{eq:technical_1},\eqref{eq:sigma_k+1_bound}}{\leq} (1 - \gamma \mu)\|x^k - x^*\|^2 + B\gamma^2\sigma_k^2 + D_1\gamma^2 \notag\\
        & - 2\gamma(1 - A\gamma) \left(f(x^k) - f(x^*)\right)\\
        & + (1-\rho)M\gamma^2\sigma_k^2 + 2CM\gamma^2\left(f(x^k) - f(x^*)\right) + M\gamma^2 D_2\\
        &= (1-\gamma\mu)\|x^k - x^*\|^2 + \left(1 - \rho + \frac{B}{M}\right)M\gamma^2\sigma_k^2\\
        & - 2\gamma \left(1 - (A + CM)\gamma\right)\left(f(x^k) - f(x^*)\right)+ (D_1 + MD_2)\gamma^2\\
        &\overset{\eqref{eq:stepsize_condition}}{\leq} \left(1 - \min\left\{\gamma\mu, \rho - \frac{B}{M}\right\}\right)V_k + (D_1 + MD_2)\gamma^2,
    \end{align*}
    where in the last step it is used that $\|x^k - x^*\|^2 + M\gamma^2 \sigma_k^2 = V_k$. Taking the full expectation from the derived inequality, one gets
    \begin{eqnarray*}
        \EE\left[V_{k+1}\right] &\leq& \left(1 - \min\left\{\gamma\mu, \rho - \frac{B}{M}\right\}\right)\EE\left[V_k\right]\\
        &&+ (D_1 + MD_2)\gamma^2.
    \end{eqnarray*}
    Finally, the above inequality implies
    \begin{eqnarray*}
        \EE\left[V_{k+1}\right] &\leq& \left(1 - \min\left\{\gamma\mu, \rho - \frac{B}{M}\right\}\right)^{k+1}\EE\left[V_0\right]\\
        &&+ (D_1 + MD_2)\gamma^2\sum\limits_{t=0}^k\left(1 - \min\left\{\gamma\mu, \rho - \frac{B}{M}\right\}\right)^t\\
        &\leq& \left(1 - \min\left\{\gamma\mu, \rho - \frac{B}{M}\right\}\right)^{k+1}\EE\left[V_0\right]\\
        &&+ (D_1 + MD_2)\gamma^2\sum\limits_{t=0}^\infty \left(1 - \min\left\{\gamma\mu, \rho - \frac{B}{M}\right\}\right)^t\\
        &\leq& \left(1 - \min\left\{\gamma\mu, \rho - \frac{B}{M}\right\}\right)^{k+1}\EE\left[V_0\right]\\
        &&+ \frac{(D_1 + MD_2)\gamma^2}{\min\left\{\gamma\mu, \rho - \frac{B}{M}\right\}},
    \end{eqnarray*}
    which concludes the proof.
\end{proof}

Since $V_k \geq \|x^k - x^*\|^2$, the above result means that \algname{SGD} under Assumption~\ref{as:param_assumption} and quasi-strong monotonicity converges (in expectation) linearly to some neighborhood of the solution. The neighborhood size depends on the noises $D_1$ and $D_2$ and stepsize $\gamma$. This kind of behavior is classical for \algname{SGD}. When $D_1$ and $D_2$ are non-zero, one needs either to use mini-batching or decrease stepsize to achieve any predefined accuracy, e.g., see \cite{stich2019unified}.

The main property of Theorem~\ref{thm:main_theorem} is that despite its generality, it provides tight rates of convergence in special cases (up to numerical factors). The next section supports this claim with different examples.

\section{Special Cases}

\paragraph{\algname{SGD}: uniformly bounded variance case.} As it is shown in \eqref{eq:second_moment_UBV}, when $f$ is $L$-smooth and stochastic gradient has uniformly bounded variance, Assumption~\ref{as:param_assumption} holds with parameters $A = L$, $B = 0$, $\sigma_k \equiv 0$, $D_1 = \sigma^2$, $\rho = 1$, $C = 0$, $D_2 = 0$. Plugging these parameters in Theorem~\ref{thm:main_theorem}, one can get that for any $\gamma \leq \nicefrac{1}{L}$ and $k \geq 0$ the iterates of \algname{SGD} satisfy
\begin{equation}
    \EE\left[\|x^k - x^*\|^2\right] \leq \left(1 - \gamma \mu\right)^k \|x^0 - x^*\|^2 + \frac{\gamma \sigma^2}{\mu}, \label{eq:UBV_convergence}
\end{equation}
which matches the classical results in this setup \cite{bottou2018optimization}.

\paragraph{\algname{SGD}: expected smoothness.} Consider the situation when $g^k = \nabla f_{\xi_k}(x^k)$, where $\xi_k$ encodes stochasticity, is sampled from some distribution $\cD$ independently from previous steps, $\EE_{\xi\sim \cD}[\nabla f_{\xi}(x)] = \nabla f(x)$, and so-called expected smoothness \cite{gower2019sgd} is satisfied:
\begin{equation}
    \EE_{\xi\sim \cD}\left[\|\nabla f_{\xi}(x) - \nabla f_{\xi}(x^*)\|^2\right] \leq 2\cL\left(f(x) - f(x^*)\right), \label{eq:expected_smoothness}
\end{equation}
where $\cL > 0$ is called the expected smoothness constant. As it is shown in \cite{gower2019sgd}, this assumption is satisfied in many situations, e.g., in the finite-sum case \eqref{eq:finite_sum} with $f_i$ being convex and $L_i$-smooth, \eqref{eq:expected_smoothness} holds for a wide range of sampling strategies including standard uniform sampling, importance sampling, and different approaches for forming mini-batches. That is, $\cL$ depends both on the distribution $\cD$ and structural properties of $f$.

Moreover, expected smoothness implies Assumption~\ref{as:param_assumption}:
\begin{eqnarray*}
    \EE_k\left[\|g^k\|^2\right] &=& \EE_{\xi_k \sim \cD}\left[\|\nabla f_{\xi_k}(x^k) - \nabla f_{\xi_k}(x^*) + \nabla f_{\xi_k}(x^*)\|^2\right]\\
    &=& 2\EE_{\xi_k \sim \cD}\left[\|\nabla f_{\xi_k}(x^k) - \nabla f_{\xi_k}(x^*)\|^2\right]\\
    &&+ 2\EE_{\xi_k \sim \cD}\left[\|\nabla f_{\xi_k}(x^*)\|^2\right]\\
    &\overset{\eqref{eq:expected_smoothness}}{\leq}& 4\cL\left(f(x^k) - f(x^*)\right) + 2\sigma_*^2,
\end{eqnarray*}
where $\sigma_*^2 = \EE_{\xi \sim \cD}\|\nabla f_{\xi}(x^*)\|^2$ is the variance at the solution. That is, Assumption~\ref{as:param_assumption} holds with parameters $A = 2\cL$, $B = 0$, $\sigma_k \equiv 0$, $D_1 = 2\sigma_*^2$, $\rho = 1$, $C = 0$, $D_2 = 0$. Plugging these parameters in Theorem~\ref{thm:main_theorem}, one can get that for any $\gamma \leq \nicefrac{1}{2\cL}$ and $k \geq 0$ the iterates of \algname{SGD} satisfy
\begin{equation}
    \EE\left[\|x^k - x^*\|^2\right] \leq \left(1 - \gamma \mu\right)^k \|x^0 - x^*\|^2 + \frac{2\gamma \sigma_*^2}{\mu}, \label{eq:exp_smoothness_convergence}
\end{equation}
which matches the best-known result in this setup \cite{gower2019sgd}.

\paragraph{\algname{SGD} in the interpolation regime.} Another popular setup for the analysis of \algname{SGD} is the so-called interpolation regime. In this regime, finite-sum problem \eqref{eq:finite_sum} is considered under the assumption that functions $f_1,\ldots, f_n$ have shared minimizer $x^*$, i.e., $f_i(x^*) = \min_{x\in\R^d}f_i(x)$ for all $i=1,\ldots,n$. In the context of machine learning applications, this means that the model is complicated enough to interpolate the training data perfectly. Such formulations are motivated by the training of over-parameterized models \cite{ma2018power, zhang2021understanding} that typically satisfy good properties for the convergence of optimization methods \cite{liu2022loss}.

In particular, when interpolation holds and $f_1, \ldots, f_n$ are smooth, then for \algname{SGD} with $g^k = \nabla f_{i_k}(x^k)$, where $i_k$ is sampled uniformly at random from $\{1,\ldots, n\}$ independently from previous steps, the following inequality holds:
\begin{eqnarray}
    \EE_{i_{k}}\left[\|\nabla f_{i_k}(x^k)\|^2\right] &\leq& \frac{1}{n}\sum\limits_{i=1}^n \|\nabla f_i(x^k)\|^2 \notag\\
    &\leq& \frac{1}{n}\sum\limits_{i=1}^n 2L\left(f_i(x^k) - f_i(x^*)\right) \notag\\
    &=& 2L\left(f(x^k) - f(x^*)\right). \label{eq:interploation_smoothness}
\end{eqnarray}
That is, in this case, Assumption~\ref{as:param_assumption} holds with parameters $A = L$, $B = 0$, $\sigma_k \equiv 0$, $D_1 = 0$, $\rho = 1$, $C = 0$, $D_2 = 0$. In view of Theorem~\ref{thm:main_theorem}, this means that the algorithm converges linearly for $\gamma \leq \nicefrac{1}{L}$:
\begin{equation}
    \EE\left[\|x^k - x^*\|^2\right] \leq \left(1 - \gamma \mu\right)^k \|x^0 - x^*\|^2. \label{eq:linear_convergence_SGD}
\end{equation}
In contrast to the standard case, when \algname{SGD} converges only to some neighborhood of the solution, in the interpolation regime, \algname{SGD} converges linearly to the exact solution asymptotically in expectation. In other words, due to the special structure of stochasticity, the method behaves similarly (neglecting the differences in the smoothness constants) to its deterministic counterpart -- \algname{GD} -- but has much cheaper iterations than \algname{GD}.

There exist modifications of the condition \eqref{eq:interploation_smoothness} called Relaxed Weak Growth Condition \eqref{eq:WGC} and Relaxed Strong Growth Condition \eqref{eq:SGC} \cite{schmidt2013fast, vaswani2019fast}:
\begin{align}
    \EE_{i_{k}}\left[\|\nabla f_{i_k}(x)\|^2\right] &= 2\rho L\left(f(x) - f(x^*)\right) + \sigma^2, \tag{R-WGC} \label{eq:WGC}\\
    \EE_{i_{k}}\left[\|\nabla f_{i_k}(x)\|^2\right] &= \rho \|\nabla f(x)\|^2 + \sigma^2, \tag{R-SGC} \label{eq:SGC}
\end{align}
where $L > 0$, $\rho \geq 0$ and $\sigma^2\geq 0$ are some parameters. When $\sigma^2 = 0$, both of them imply that $\nabla f_i(x^*) = 0$ for all $i \in \{1, \ldots, n\}$, meaning that $x^*$ is a shared minimizer for all $f_i$. Moreover, when $f$ is $L$-smooth, \eqref{eq:SGC} implies \eqref{eq:WGC}. Finally, \eqref{eq:WGC} implies Assumption~\ref{as:param_assumption} with parameters $A = \rho L$, $B = 0$, $\sigma_k \equiv 0$, $D_1 = \sigma^2$, $\rho = 1$, $C = 0$, $D_2 = 0$. Plugging these parameters in Theorem~\ref{thm:main_theorem}, one can get that for any $\gamma \leq \nicefrac{1}{\rho L}$ and $k \geq 0$ the iterates of \algname{SGD} satisfy \eqref{eq:UBV_convergence}.

\paragraph{Variance reduction.} In the standard (non-over-parameterized) finite-sum settings \eqref{eq:finite_sum}, \algname{SGD} with constant stepsize converges linearly only to some neighborhood of the solution. To fix this issue variance reduced methods were proposed \cite{schmidt2017minimizing, johnson2013accelerating, defazio2014saga}, see also the recent survey \cite{gower2020variance}. These methods use special perturbations of the stochastic gradients that do not change the mean but reduce the variance during the work of the method. 

How to choose these shifts/perturbations? To answer this question, consider \algname{SGD} with the standard uniform sampling: $g^k = \nabla f_{i_k}(x^k)$, where $i_k$ is sampled uniformly at random from $\{1,\ldots, n\}$ independently from previous steps. It is easy to show that for $f_i$ being convex and $L_i$-smooth, $i \in\{1,\ldots, n\}$ expected smoothness \eqref{eq:expected_smoothness} holds with $\cL = L_{\max} = \max_{i\in [n]}L_i$ \cite{gower2019sgd}. In this case, \eqref{eq:exp_smoothness_convergence} implies that \algname{SGD} converges to the neighborhood proportional to the variance at the optimum $\sigma_*^2 = \frac{1}{n}\sum_{i=1}^n \|\nabla f_i(x^*)\|^2$. Moreover, even if the method eventually occurs at the optimum, i.e., $x^k = x^*$, it will move away from this point since $\nabla f_i(x^*) \neq 0$ in general. Clearly, the gradients $\nabla f_i(x^*)$ play a crucial role in the convergence of \algname{SGD}, but, unfortunately, one cannot use them in the method.

Nevertheless, one can theoretically study the following non-implementable method called \algname{SGD-star} \cite{gorbunov2020unified, hanzely2019one} that uses the update rule \eqref{eq:prox_SGD} with
\begin{equation*}
    g^k = \nabla f_{i_{k}}(x^k) - \nabla f_{i_k}(x^*).
\end{equation*}
Since $\nabla f(x^*)$, $\EE_k[g^k] = \nabla f(x^k)$ and
\begin{eqnarray*}
    \EE_{i_k}\left[\|g^k\|^2\right] &=& \EE_{i_k}\left[\|\nabla f_{i_k}(x^k) - \nabla f_{i_k}(x^*)\|^2\right]\\
    &=& \frac{1}{n}\sum\limits_{i=1}^n\|\nabla f_i(x^k) - \nabla f(x^*)\|^2\\
    &\leq& \frac{2L_{\max}}{n}\sum\limits_{i=1}^nV_{f_{i}}(x^k, x^*)\\
    &=& 2L_{\max}\left(f(x^k) - f(x^*)\right),
\end{eqnarray*}
where $V_{\psi}(x, y) = \psi(x) - \psi(y) - \langle \nabla \psi(y), x - y \rangle$ is a Bregman divergence of function $\psi$ \cite{bregman1967relaxation, nemirovskij1983problem}. The above derivation implies that Assumption~\ref{as:param_assumption} holds in this case with parameters $A = L_{\max}$, $B = 0$, $\sigma_k \equiv 0$, $D_1 = 0$, $\rho = 1$, $C = 0$, $D_2 = 0$. Therefore, according to Theorem~\ref{thm:main_theorem} \algname{SGD-star} with stepsize $\gamma \leq \nicefrac{1}{L_{\max}}$ converges linearly as in \eqref{eq:linear_convergence_SGD}. Unfortunately, as it is noticed before, gradients $\nabla f_i(x^*)$ are not known in advance, meaning that \algname{SGD-star} is not practical in general.

However, it is possible to learn ``optimal shifts''/control variates \cite{nelson1990control} $\nabla f_i(x^*)$ on the fly, and, in some sense, this is what variance-reduced methods do. As an example, consider Loopless Stochastic Variance-Reduced Gradient (\algname{LSVRG}) \cite{hofmann2015variance, kovalev2020don}. \algname{LSVRG} follows the scheme \eqref{eq:prox_SGD} with
\begin{eqnarray}
    g^k &=& \nabla f_{i_k}(x^k) - \nabla f_{i_k}(w^k) + \nabla f(w^k), \label{eq:g^k_LSVRG}\\
    w^{k+1} &=& \begin{cases} x^k,& \text{with probability } p,\\ w^k,& \text{with probability } 1-p. \end{cases} \label{eq:w^K_LSVRG}
\end{eqnarray}
The gradient $\nabla f_{i_{k}}(w^k)$ serves as an approximation of $\nabla f(x^*)$ and $\nabla f(w^k)$ is added to the estimator to make it unbiased: $\EE_{k}[g^k] = \nabla f(x^k) - \nabla f(w^k) + \nabla f(w^k) = \nabla f(x^k)$. The full gradient is computed with (typically small) probability $p$ at each iteration at the reference point $w^k$, i.e., when $w^k$ is updated. When $w^{k+1} = w^k$ the algorithm requires only $2$ gradients computations: $\nabla f_{i_k}(x^k)$ and $\nabla f_{i_k}(w^k)$. Therefore, an expected oracle cost of one step equals $(2+n)p + 2(1-p) = np + 2$. For $p \sim \nicefrac{1}{n}$, the expected oracle cost of one step becomes $O(1)$, i.e., comparable to the oracle cost of one standard \algname{SGD} step.

The introduced shifts improve the upper bound for the second moment of the stochastic gradient:
\begin{eqnarray*}
    \EE_{k}\left[\|g^k\|^2\right] 
    &\leq& 2\EE_k \left[\|\nabla f_{i_k}(x^k) - \nabla f_{i_k}(x^*)\|^2\right]\\
    &&+ 2\EE_k \left[\|\nabla f_{i_k}(w^k) - \nabla f_{i_k}(x^*) - \nabla f(w^k)\|^2\right]\\
    &\leq& 2\EE_k \left[\|\nabla f_{i_k}(x^k) - \nabla f_{i_k}(x^*)\|^2\right]\\
    &&+ 2\EE_k \left[\|\nabla f_{i_k}(w^k) - \nabla f_{i_k}(x^*)\|^2\right],
\end{eqnarray*}
where the last step holds since the variance is upper-bounded by the second moment. Applying smoothness and convexity of $f_i(x)$, one can get
\begin{eqnarray}
    \EE_{k}\left[\|g^k\|^2\right] &=& \frac{2}{n}\sum\limits_{i=1}^n \|\nabla f_i(x^k) - \nabla f_i(x^*)\|^2 \notag\\
    &&+ 2\underbrace{\frac{1}{n}\sum\limits_{i=1}^n\|\nabla f_{i}(w^k) - \nabla f_{i}(x^*)\|^2}_{\sigma_k^2} \notag\\
    &\leq& \frac{4L_{\max}}{n}\sum\limits_{i=1}^n V_{f_i}(x^k, x^*) + 2\sigma_k^2 \label{eq:ncsjdbhvcsdbhj}\\
    &=& 4L_{\max}\left(f(x^k) - f(x^*)\right) + 2\sigma_k^2. \label{eq:LSVRG_second_moment}
\end{eqnarray}
This is the first example in this paper, when a non-trivial sequence of $\{\sigma_k\}_{k\geq 0}$ is used: it measures how the current shifts $\nabla f_i(w^k)$ are far from the ``ideal'' ones $\nabla f_i(x^*)$, which is aligned with the intuition provided earlier. Moreover, by definition of $\sigma_k$ and $w^{k+1}$
\begin{eqnarray}
    \EE_k\left[\sigma_{k+1}^2\right] &\overset{\eqref{eq:w^K_LSVRG}}{=}& (1-p)\sigma_k^2 + \frac{p}{n}\sum\limits_{i=1}^n \|\nabla f_i(x^k) - \nabla f_i(x^*)\|^2\notag\\
    &\leq& (1-p)\sigma_k^2 + \frac{2p L_{\max}}{n}\sum\limits_{i=1}^n V_{f_i}(x^k, x^*)\notag\\
    &=& (1-p)\sigma_k^2 + 2pL_{\max}\left(f(x^k) - f(x^*)\right). \label{eq:LSVRG_sigma_k}
\end{eqnarray}
One can interpret the above inequality as follows: with probability $1-p$, the approximations of the gradients at the solution do not change, and with probability $p$ the approximation changes, and its quality can be bounded as $2L_{\max}(f(x^k) - f(x^*))$. Since the change happens once every $\nicefrac{1}{p}$ steps, the quantity $p$ can be seen as the rate of the variance reduction process.

Now, everything is ready to get the rate of convergence of \algname{LSVRG}. Indeed, inequalities \eqref{eq:LSVRG_second_moment} and \eqref{eq:LSVRG_sigma_k} imply that Assumption~\ref{as:param_assumption} holds in this case with parameters $A = 2L_{\max}$, $B = 2$, $\sigma_k^2 = \frac{1}{n}\sum_{i=1}^n\|\nabla f_i(w^k) - \nabla f_i(x^*)\|^2$, $D_1 = 0$, $\rho = p$, $C = pL_{\max}$, $D_2 = 0$. Plugging these parameters in Theorem~\ref{thm:main_theorem} with $M = \nicefrac{4}{p}$, one can get that for any $\gamma \leq \nicefrac{1}{6L_{\max}}$ the iterates of \algname{LSVRG} satisfy
\begin{eqnarray}
    \EE\left[V_k\right] \leq \left(1 - \min\left\{\gamma\mu, \frac{p}{2}\right\}\right)^k V_0, \label{eq:LSVRG_rate}
\end{eqnarray}
where $V_k = \|x^k - x^*\|^2 + \nicefrac{4\gamma^2\sigma_k^2}{p}$. That is, unlike standard \algname{SGD}, \algname{LSVRG} converges linearly with constant stepsize. Moreover, formula \eqref{eq:LSVRG_rate} establishes linear convergence not only for $\EE[\|x^k - x^*\|^2]$ but also for $\EE[\sigma_k^2]$ supporting the discussed intuition about the role of sequence $\{\sigma_k^2\}_{k\geq 0}$.

It is possible to show that some other variance-reduced methods and their modifications fit the considered theoretical framework. For example, the celebrated \algname{SAGA} \cite{defazio2014saga} and recently proposed \algname{LSVRG} with arbitrary sampling \cite{qian2021lsvrg} can also be seen as special cases of the framework, see the details in \cite{gorbunov2020unified, danilova2022distributed}.

\paragraph{Distributed methods with compression.} Consider the distributed system, where $n$ workers can communicate with a central server. Assume that for all $i \in [n]$ worker $i$ stores the information about the function $f_i$, i.e., this worker can compute (stochastic) gradients of $f_i$. The goal of the workers is to minimize function $f$ having a finite-sum form \eqref{eq:finite_sum}.

In such setups, workers have to exchange some information about their functions (e.g., gradients), i.e., they need to communicate with a server. These communications are often a main bottleneck for distributed methods \cite{kairouz2021advances} (especially when the number of workers $n$ is large). Therefore, it is natural to apply some compression to the messages that workers need to send to the server.

There are many examples of compression operators, but for simplicity, this paper focuses on the unbiased compression operators only (see the examples of other classes in \cite{beznosikov2020biased, sahu2021rethinking}). Stochastic operator $\cQ : \R^d \to \R^d$ is called unbiased compression operator \cite{horvath2022stochastic} (or simply unbiased compressor) if there exist such $\omega \geq 0$ such that for all $x\in\R^d$ 
\begin{equation}
    \EE_{\cQ}[\cQ(x)] = x,\quad \EE_{\cQ}\left[\|\cQ(x) - x\|^2\right] \leq \omega \|x\|^2, \label{eq:quantization_def}
\end{equation}
where $\EE_{\cQ}$ denotes the expectation w.r.t.\ the randomness coming from $\cQ$. There are many examples of unbiased compressors; see \cite{beznosikov2020biased, horvath2021better}.

One of the basic distributed methods with unbiased compression is Compressed Distributed Gradient Descent (\algname{CDGD}) \cite{alistarh2017qsgd, khirirat2018distributed}: it follows the general scheme \eqref{eq:prox_SGD} with
\begin{equation}
    g^k = \frac{1}{n}\sum\limits_{i=1}^n \cQ(\nabla f_i(x^k)), \label{eq:CDGD}
\end{equation}
where compression operators $\cQ(\nabla f_1(x^k)), \ldots, \cQ(\nabla f_n(x^k))$ are applied on different workers independently from each other and previous iterations. Here for simplicity, the case when workers compute full gradients is considered, though it is possible to analyze the version with stochastic gradients similarly. If functions $f_1,\ldots, f_n$ are convex and $L_i$-smooth, $f$ is $L$-smooth, then using the independence of the compression operators, one can derive
\begin{eqnarray*}
    \EE_k\left[\|g^k\|^2\right] &=& \EE_k\left[\|g^k - \nabla f(x^k)\|^2\right] + \|\nabla f(x^k)\|^2\\
    &\leq& \EE_k\left[\left\|\frac{1}{n}\sum\limits_{i=1}^n (\cQ(\nabla f_i(x^k)) - \nabla f_i(x^k))\right\|^2\right]\\
    && + 2L\left(f(x^k) - f(x^*)\right)\\
    &=& \frac{1}{n^2}\sum\limits_{i=1}^n\EE_k\left[\|\cQ(\nabla f_i(x^k)) - \nabla f_i(x^k)\|^2\right]\\
    && + 2L\left(f(x^k) - f(x^*)\right)\\
    &\overset{\eqref{eq:quantization_def}}{\leq}& \frac{\omega}{n^2}\sum\limits_{i=1}^n \|\nabla f_i(x^k)\|^2 + 2L\left(f(x^k) - f(x^*)\right).
\end{eqnarray*}
Then, it holds that $\|\nabla f_i(x^k)\|^2 \leq 2\|\nabla f_i(x^k) - \nabla f_i(x^*)\|^2 + 2 \|\nabla f_i(x^*)\|^2$ and similarly to \eqref{eq:ncsjdbhvcsdbhj} one can upper-bound $\frac{1}{n}\sum_{i=1}^n \|\nabla f_i(x^k) - \nabla f_i(x^*)\|^2 \leq 2L_{\max}(f(x^k) - f(x^*))$. As a result, one can get
\begin{eqnarray*}
    \EE_k\left[\|g^k\|^2\right] &\leq& 2\left(L + \frac{2L_{\max}\omega}{n}\right)\left(f(x^k) - f(x^*)\right) + \frac{2\omega\zeta_*^2}{n},
\end{eqnarray*}
where $\zeta_*^2 = \frac{1}{n}\sum_{i=1}^n\|\nabla f(x^*)\|^2$, which is non-zero in general. Thus, Assumption~\ref{as:param_assumption} holds in this case with parameters $A = L + \nicefrac{2\omega L_{\max}}{n}$, $B = 0$, $\sigma_k \equiv 0$, $D_1 = \nicefrac{2\omega\zeta_*^2}{n}$, $\rho = 1$, $C = 0$, $D_2 = 0$ and, according to Theorem~\ref{thm:main_theorem} \algname{CDGD} with stepsize $\gamma \leq \nicefrac{1}{A}$ converges as
\begin{equation}
    \EE\left[\|x^k - x^*\|^2\right] \leq \left(1 - \gamma \mu\right)^k \|x^0 - x^*\|^2 + \frac{2\gamma\omega \zeta_*^2}{n\mu} \label{eq:CDGD_convergence}
\end{equation}
When functions on workers have different optima and $\cQ(\cdot)$ is not an identity operator ($\omega > 0$), $\zeta_*^2 > 0$ and the above formula gives linear convergence only to the neighborhood of the solution. The reason for this is that the variance of the compressed gradient is proportional to its squared norm, and when the norms of $\nabla f_i(x^*)$ are non-zero, the update of the method contains the noise coming from the compression even if the method is at the solution. From this perspective, \algname{CDGD} and \algname{SGD} applied to the finite-sum problems behave very similarly.

This similarity hints the solution to this issue. In particular, linear convergence to the exact optimum (asymptotically, in expectation) can be achieved for \algname{SGD} via the variance reduction. Thus, one needs to design a variance reduction mechanism to handle the variance coming from the compression. This was done in \cite{mishchenko2019distributed, horvath2022stochastic}, where the authors proposed the method called \algname{DIANA}: it has the form \eqref{eq:prox_SGD} with
\begin{eqnarray}
    g^k = \frac{1}{n}\sum\limits_{i=1}^n g_i^k, \label{eq:DIANA_g_k}
\end{eqnarray}
where
\begin{gather*}
    \Delta_i^k = \cQ(\nabla f_i(x^k) - h_i^k),\quad g_i^k = h_i^k + \Delta_i^k,\quad h_i^{k+1} = h_i^k + \alpha \Delta_i^k,
\end{gather*}
where $\alpha > 0$ and $h_i^0 = 0$ for all $i \in [n]$. One can show that \algname{DIANA} can be implemented using only compressed vector communications from workers to the server \cite{mishchenko2019distributed}. 

Now, the convergence of \algname{DIANA} will be discussed. If functions $f_1,\ldots, f_n$ are convex and $L_i$-smooth, $f$ is $L$-smooth, then using the independence of the compression operators, one can get for $h^k = \frac{1}{n}\sum_{i=1}^n h_i^k$
\begin{eqnarray*}
    \EE_k\left[\|g^k\|^2\right] &=& \EE_k\left[\|g^k - (\nabla f(x^k) - h^k)\|^2\right] + \|\nabla f(x^k) - h^k\|^2\\
    &\leq& \EE_k\left[\left\|\frac{1}{n}\sum\limits_{i=1}^n (\Delta_i^k - (\nabla f_i(x^k) - h_i^k))\right\|^2\right]\\
    && + 2\|\nabla f(x^k)\|^2 + 2\|h^k\|^2\\
    &\leq& \frac{1}{n^2}\sum\limits_{i=1}^n\EE_k\left[\|\Delta_i^k - (\nabla f_i(x^k) - h_i^k)\|^2\right]\\
    && + 4L\left(f(x^k) - f(x^*)\right) + \frac{2}{n}\sum\limits_{i=1}^n \|h_i^k - \nabla f_i(x^*)\|^2\\
    &\overset{\eqref{eq:quantization_def}}{\leq}& \frac{\omega}{n^2}\sum\limits_{i=1}^n \|\nabla f_i(x^k) - h_i^k\|^2 + 4L\left(f(x^k) - f(x^*)\right)\\
    &&\quad + \frac{2}{n}\sum\limits_{i=1}^n \|h_i^k - \nabla f_i(x^*)\|^2.
\end{eqnarray*}
Similarly to the analysis of \algname{CDGD}, one can use $\|\nabla f_i(x^k) - h_i^k\|^2 \leq 2\|\nabla f_i(x^k) - \nabla f_i(x^*)\|^2 + 2 \|h_i^k - \nabla f_i(x^*)\|^2$ and similarly to \eqref{eq:ncsjdbhvcsdbhj} one can upper-bound $\frac{1}{n}\sum_{i=1}^n \|\nabla f_i(x^k) - \nabla f_i(x^*)\|^2 \leq 2L_{\max}(f(x^k) - f(x^*))$. As a result, one can get
\begin{eqnarray*}
    \EE_k\left[\|g^k\|^2\right] \leq 4\left(L + \frac{L_{\max}\omega}{n}\right)\left(f(x^k) - f(x^*)\right) + 2\left(1 + \frac{\omega}{n}\right)\sigma_k^2,
\end{eqnarray*}
where $\sigma_k^2 = \frac{1}{n}\sum_{i=1}^n \|h_i^k - \nabla f_i(x^*)\|^2$. Moreover, when $\alpha \leq \nicefrac{1}{(1+\omega)}$ one can prove \cite{horvath2022stochastic}
\begin{eqnarray*}
    \EE_k\left[\sigma_{k+1}^2\right] \leq (1 - \alpha)\sigma_k^2 + 2\alpha L_{\max} \left(f(x^k) - f(x^*)\right).
\end{eqnarray*}
Thus, Assumption~\ref{as:param_assumption} holds in this case with parameters $A = 2L + \nicefrac{2\omega L_{\max}}{n}$, $B = 2 + \nicefrac{2\omega}{n}$, $\sigma_k^2 = \frac{1}{n}\sum_{i=1}^n \|h_i^k - \nabla f_i(x^*)\|^2$, $D_1 = 0$, $\rho = \alpha$, $C = \alpha L_{\max}$, $D_2 = 0$ and, according to Theorem~\ref{thm:main_theorem} \algname{DIANA} with stepsizes $\alpha = \nicefrac{1}{(1+\omega)}$, $\gamma \leq \nicefrac{1}{A + CM}$, $M = \nicefrac{2B}{\alpha}$ converges as
\begin{equation}
    \EE\left[V_k\right] \leq \left(1 - \gamma \mu\right)^k V_0, \label{eq:DIANA_convergence}
\end{equation}
where $V_k = \|x^k - x^*\|^2 + 4(1+\omega)(1+ \nicefrac{\omega}{n})\gamma^2 \sigma_k^2$. In contrast to \algname{CDGD}, \algname{DIANA} converges linearly to the exact solution (asymptotically in expectation). Therefore, one can see \algname{DIANA} as a variance-reduced version of \algname{CDGD}. One can also that versions of \algname{DIANA} with \algname{SGD}/\algname{SVRG}/\algname{SAGA} estimators fit the discussed analysis as special cases \cite{horvath2022stochastic, gorbunov2020unified}.

\paragraph{Coordinate-wise randomization.} Finally, consider stochastic methods with coordinate-wise randomization (one can check \cite{nesterov2012efficiency,richtarik2014iteration} for the detailed introduction). In its simplest form, Randomized Coordinate Descent (\algname{RCD}) has the form \eqref{eq:prox_SGD} with
\begin{equation}
    g^k = d \nabla_{i_k} f(x^k) e_{i_k} = d\langle \nabla f(x^k), e_{i_k} \rangle, \label{eq:RCD}
\end{equation}
where $i_k$ is sampled uniformly at random from $\{1,\ldots,d\}$, $\nabla_{i_k} f(x^k)$ denotes the $i_k$-th component of $\nabla f(x^k)$, and vectors $e_1,e_2,\ldots,e_d$ form a standard basis in $\R^d$. Direct calculations of expectations show that
\begin{equation*}
    \EE_k[g^k] = \frac{1}{d}\sum\limits_{i=1}^d d\nabla_{i} f(x^k) e_{i} = \nabla f(x^k)
\end{equation*}
and under convexity and $L$-smoothness of $f$
\begin{eqnarray*}
    \EE_k\left[\|g^k\|^2\right] &=& \frac{1}{d}\sum\limits_{i=1}^d d^2 (\nabla_i f(x^k))^2 = d\|\nabla f(x^k)\|^2\\
    &\leq& 2dL\left(f(x^k) - f(x^*)\right),
\end{eqnarray*}
meaning that Assumption~\ref{as:param_assumption} holds in this case with parameters $A = dL$, $B = 0$, $\sigma_k^2 \equiv 0$, $D_1 = 0$, $\rho = 1$, $C = 0$, $D_2 = 0$. Therefore, Therefore, according to Theorem~\ref{thm:main_theorem} \algname{RCD} with stepsize $\gamma \leq \nicefrac{1}{dL}$ converges linearly as in \eqref{eq:linear_convergence_SGD} when $f$ is $\mu$-strongly convex and $L$-smooth. Unfortunately, the described analysis does not cover the case of non-uniform sampling of coordinates.

\section{Extensions and Alternative Approaches}

\paragraph{Relaxed assumptions, composite optimization.} In fact, the original work \cite{gorbunov2020unified} considers a more general setup of composite optimization:
\begin{equation*}
    \min\limits_{x\in\R^d}\left\{f(x) + R(x)\right\},
\end{equation*}
where $f$ is a smooth function and $R(x)$ is a proper closed and convex function \cite{beck2017first}. Moreover, it is sufficient to assume that $f$ is $\mu$-quasi strongly convex \cite{necoara2019linear} and $f(x) - f(x^*) - \langle \nabla f(x^*), x - x^* \rangle \geq 0$ for all $x \in \R^d$.

\paragraph{Convex and non-convex cases.} Extension to the convex (non-strongly convex) composite case is given in \cite{khaled2020unified}. For the non-convex problems, the unified analyses of \algname{SGD}-like methods are proposed in \cite{khaled2020better} (without the support of variance-reduction) and in \cite{li2020unified} (with the support of variance-reduction and communication compression). The unified view on the analysis of \emph{optimal} (stochastic) first-order methods for non-convex optimization is given by \cite{danilova2022recent}.

\paragraph{Extensions for distributed methods.} There exist several modifications of the unified analysis discussed in this paper for different classes of distributed optimization algorithms. In particular, unified analysis of the methods with error feedback \cite{seide20141,stich2018sparsified} is proposed in \cite{gorbunov2020linearly} (for contractive biased compressors) and in \cite{danilova2022distributed} (for absolute biased compressors). In \cite{gorbunov2021local}, the authors extend the framework to the distributed methods with local steps. Modification for the decentralized methods is proposed in \cite{rajawat2020primal}.

\paragraph{Variational inequalities.} It is possible to modify the unified analysis to cover the methods for more general problems like variational inequalities \cite{facchinei2003finite,beznosikov2022smooth}. In particular, the extension for Stochastic Gradient Descent-Ascent-type methods is given in \cite{beznosikov2022stochastic} and for Stochastic Extragradient-type \cite{korpelevich1976extragradient,juditsky2011solving} methods it is done in \cite{gorbunov2022stochastic,beznosikov2022unified}.

\paragraph{Alternative approaches.} There are also several alternative approaches to the unification of the analysis of stochastic first-order methods for different setups. In \cite{stich2019unified}, the authors propose a simple approach of estimating the recurrences appearing in the analysis of  \algname{SGD} with different stepsize policies. The work \cite{ajalloeian2020convergence} gives an analysis of \algname{SGD} with biased estimators under quite general assumptions on the bias. The unified view on the analysis of the stochastic methods with and without variance reduction and \emph{acceleration} through the Nesterov's estimating sequences \cite{nesterov2003introductory} is proposed in \cite{kulunchakov2020estimate}. In \cite{taylor2019stochastic}, the authors propose a systematic way of deriving potential-based proofs of the convergence of stochastic first-order methods for stochastic convex smooth optimization problems. Unified approaches to the analysis of variance-reduced and coordinate-wise methods are given in \cite{gower2021stochastic,hanzely2019one}. For a quite general class of composite problems with a non-smooth part having a finite-sum form, the authors of \cite{mishchenko2019stochastic} proposed Stochastic Decoupling Method generalizing many existing stochastic first-order methods including proximal \algname{SGD}, \algname{SAGA}, \algname{Point-SAGA} \cite{defazio2016simple} and many more. There are also several works with generalized analyses of different distributed methods. In particular, unified approaches for the methods involving communication compression are studied in \cite{condat2022murana,condat2022ef,shulgin2022shifted,richtarik20223pc}, and for the decentralized methods with local steps, generalized analyses are given in \cite{koloskova2020unified,huang2022tackling}.

\section{Conclusions}

In this short note, the unified approach to the analysis of \algname{SGD}-type methods from \cite{gorbunov2020unified} was discussed. This analysis is simple and tight, making it very convenient for the first introduction to the convergence of \algname{SGD}-like methods, e.g., one can use this in teaching and tutorials. However, this technique can be useful for research purposes due to the systematization and links between various techniques in stochastic optimization. For example, the technique can simplify the analysis \cite{salim2022dualize} and can lead to the new methods: the first linearly converging methods with error feedback and local steps were obtained with the help of the unified analysis \cite{gorbunov2020linearly,gorbunov2021local}.

It is also worth mentioning several important limitations of the discussed framework. First, it does not cover methods with biased estimators like in \algname{SAG} \cite{schmidt2017minimizing}, \algname{SARAH} \cite{nguyen2017sarah} or \algname{clipped-SGD} \cite{pascanu2013difficulty}. In the context of methods with coordinate-wise randomization, the analysis does not support non-uniform sampling. Next, accelerated stochastic methods like \algname{AC-SA} \cite{ghadimi2012optimal} and \algname{Katyusha} \cite{allen2017katyusha} do not fit the considered framework. Finally, the analysis implicitly relies on the smoothness of the problem (all known special cases use smoothness of $f$) and the boundedness of the variance of the estimator (at least at the given point). However, both assumptions can be relaxed in many ways \cite{zhang2019gradient,zhang2020adaptive,mai2021stability}. It would be interesting to propose a general framework covering at least some of these aspects.

\section{Cross-References}

\begin{itemize}
    \item Monte-Carlo Simulations for Stochastic Optimization
    \item SSC Minimization Algorithms for Nonsmooth and Stochastic Optimization
    \item Stochastic Bilevel Programs
    \item Stochastic Global Optimization: Stopping Rules
    \item Stochastic Programming: Minimax Approach
    \item Stochastic Quasigradient Methods
    \item Stochastic gradient descent
    \item Stochastic Lookahead Optimization with Bayesian Forecasting
    \item Randomized gradient-free methods in optimization and saddle-point problems
\end{itemize}

\section{Acknowledgements}

The author thanks Samuel Horv\'ath for the suggestions for the improvements of the text. The research was partially supported by the Ministry of Science and Higher Education of the Russian Federation (Goszadaniye) No.075-00337-20-03, project No. 0714-2020-0005.

\bibliographystyle{abbrv}
\bibliography{references}

\end{document}